\begin{document}

\theoremstyle{plain}

\newtheorem{thm}{Theorem}[section]

\newtheorem{lem}[thm]{Lemma}
\newtheorem{Problem B}[thm]{Problem B}

\newtheorem{pro}[thm]{Proposition}
\newtheorem{conj}[thm]{Conjecture}
\newtheorem{cor}[thm]{Corollary}
\newtheorem{que}[thm]{Question}
\newtheorem{rem}[thm]{Remark}
\newtheorem{defi}[thm]{Definition}

\newtheorem*{thmA}{Theorem A}
\newtheorem*{thmB}{Theorem B}
\newtheorem*{corB}{Corollary B}
\newtheorem*{thmC}{Theorem C}
\newtheorem*{thmD}{Theorem D}
\newtheorem*{thmE}{Theorem E}
 
\newtheorem*{thmAcl}{Main Theorem$^{*}$}
\newtheorem*{thmBcl}{Theorem B$^{*}$}

\newcommand{\Maxn}{\operatorname{Max_{\textbf{N}}}}
\newcommand{\Syl}{\operatorname{Syl}}
\newcommand{\dl}{\operatorname{dl}}
\newcommand{\Con}{\operatorname{Con}}
\newcommand{\cl}{\operatorname{cl}}
\newcommand{\Stab}{\operatorname{Stab}}
\newcommand{\Aut}{\operatorname{Aut}}
\newcommand{\Ker}{\operatorname{Ker}}
\newcommand{\fl}{\operatorname{fl}}
\newcommand{\Irr}{\operatorname{Irr}}
\newcommand{\FF}{\mathbb{F}}
\newcommand{\EE}{\mathbb{E}}
\newcommand{\NN}{\mathbb{N}}
\newcommand{\N}{\mathbf{N}}
\newcommand{\bfC}{\mathbf{C}}
\newcommand{\bfO}{\mathbf{O}}
\newcommand{\bfF}{\mathbf{F}}
\def\GGG{{\mathcal G}}
\def\HHH{{\mathcal H}}
\def\HH{{\mathcal H}}
\def\irra#1#2{{\rm Irr}_{#1}(#2)}

\renewcommand{\labelenumi}{\upshape (\roman{enumi})}

\newcommand{\PSL}{\operatorname{PSL}}
\newcommand{\PSU}{\operatorname{PSU}}
\newcommand{\alt}{\operatorname{Alt}}

\providecommand{\V}{\mathrm{V}}
\providecommand{\E}{\mathrm{E}}
\providecommand{\ir}{\mathrm{Irr_{rv}}}
\providecommand{\Irrr}{\mathrm{Irr_{rv}}}
\providecommand{\re}{\mathrm{Re}}

\numberwithin{equation}{section}
\def\irrp#1{{\rm Irr}_{p'}(#1)}

\def\ibrrp#1{{\rm IBr}_{\Bbb R, p'}(#1)}
\def\Z{{\mathbb Z}}
\def\C{{\mathbb C}}
\def\Q{{\mathbb Q}}
\def\irr#1{{\rm Irr}(#1)}
\def\irrp#1{{\rm Irr}_{p^\prime}(#1)}
\def\irrq#1{{\rm Irr}_{q^\prime}(#1)}
\def \c#1{{\cal #1}}
\def \aut#1{{\rm Aut}(#1)}
\def\cent#1#2{{\bf C}_{#1}(#2)}
\def\syl#1#2{{\rm Syl}_#1(#2)}
\def\nor{\triangleleft\,}
\def\oh#1#2{{\bf O}_{#1}(#2)}
\def\Oh#1#2{{\bf O}^{#1}(#2)}
\def\zent#1{{\bf Z}(#1)}
\def\det#1{{\rm det}(#1)}
\def\gal#1{{\rm Gal}(#1)}
\def\ker#1{{\rm ker}(#1)}
\def\norm#1#2{{\bf N}_{#1}(#2)}
\def\alt#1{{\rm Alt}(#1)}
\def\iitem#1{\goodbreak\par\noindent{\bf #1}}
   \def \mod#1{\, {\rm mod} \, #1 \, }
\def\sbs{\subseteq}

\def\gc{{\bf GC}}
\def\ngc{{non-{\bf GC}}}
\def\ngcs{{non-{\bf GC}$^*$}}
\newcommand{\notd}{{\!\not{|}}}
\newcommand{\Out}{{\mathrm {Out}}}
\newcommand{\Mult}{{\mathrm {Mult}}}
\newcommand{\Inn}{{\mathrm {Inn}}}
\newcommand{\IBR}{{\mathrm {IBr}}}
\newcommand{\IBRL}{{\mathrm {IBr}}_{\ell}}
\newcommand{\IBRP}{{\mathrm {IBr}}_{p}}
\newcommand{\ord}{{\mathrm {ord}}}
\def\id{\mathop{\mathrm{ id}}\nolimits}
\renewcommand{\Im}{{\mathrm {Im}}}
\newcommand{\Ind}{{\mathrm {Ind}}}
\newcommand{\diag}{{\mathrm {diag}}}
\newcommand{\soc}{{\mathrm {soc}}}
\newcommand{\End}{{\mathrm {End}}}
\newcommand{\sol}{{\mathrm {sol}}}
\newcommand{\Hom}{{\mathrm {Hom}}}
\newcommand{\Mor}{{\mathrm {Mor}}}
\newcommand{\Mat}{{\mathrm {Mat}}}
\def\rank{\mathop{\mathrm{ rank}}\nolimits}
\newcommand{\Tr}{{\mathrm {Tr}}}
\newcommand{\tr}{{\mathrm {tr}}}
\newcommand{\Gal}{{\rm Gal}}
\newcommand{\Spec}{{\mathrm {Spec}}}
\newcommand{\ad}{{\mathrm {ad}}}
\newcommand{\Sym}{{\mathrm {Sym}}}
\newcommand{\Char}{{\mathrm {char}}}
\newcommand{\pr}{{\mathrm {pr}}}
\newcommand{\rad}{{\mathrm {rad}}}
\newcommand{\abel}{{\mathrm {abel}}}
\newcommand{\codim}{{\mathrm {codim}}}
\newcommand{\ind}{{\mathrm {ind}}}
\newcommand{\Res}{{\mathrm {Res}}}
\newcommand{\Lie}{{\mathrm {Lie}}}
\newcommand{\Ext}{{\mathrm {Ext}}}
\newcommand{\Alt}{{\mathrm {Alt}}}
\newcommand{\AAA}{{\sf A}}
\newcommand{\SSS}{{\sf S}}
\newcommand{\SL}{{\mathrm {SL}}}
\newcommand{\Sp}{{\mathrm {Sp}}}
\newcommand{\PSp}{{\mathrm {PSp}}}
\newcommand{\SU}{{\mathrm {SU}}}
\newcommand{\GL}{{\mathrm {GL}}}
\newcommand{\GU}{{\mathrm {GU}}}
\newcommand{\Spin}{{\mathrm {Spin}}}
\newcommand{\CC}{{\mathbb C}}
\newcommand{\CB}{{\mathbf C}}
\newcommand{\RR}{{\mathbb R}}
\newcommand{\QQ}{{\mathbb Q}}
\newcommand{\ZZ}{{\mathbb Z}}
\newcommand{\bfN}{{\mathbf N}}
\newcommand{\bfZ}{{\mathbf Z}}
\newcommand{\PP}{{\mathbb P}}
\newcommand{\cG}{{\mathcal G}}
\newcommand{\cH}{{\mathcal H}}
\newcommand{\cQ}{{\mathcal Q}}
\newcommand{\GA}{{\mathfrak G}}
\newcommand{\cT}{{\mathcal T}}
\newcommand{\cL}{{\mathcal L}}
\newcommand{\cS}{{\mathcal S}}
\newcommand{\cR}{{\mathcal R}}
\newcommand{\GCD}{\GC^{*}}
\newcommand{\TCD}{\TC^{*}}
\newcommand{\FD}{F^{*}}
\newcommand{\GD}{G^{*}}
\newcommand{\HD}{H^{*}}
\newcommand{\GCF}{\GC^{F}}
\newcommand{\TCF}{\TC^{F}}
\newcommand{\PCF}{\PC^{F}}
\newcommand{\GCDF}{(\GC^{*})^{F^{*}}}
\newcommand{\RGTT}{R^{\GC}_{\TC}(\theta)}
\newcommand{\RGTA}{R^{\GC}_{\TC}(1)}
\newcommand{\Om}{\Omega}
\newcommand{\eps}{\epsilon}
\newcommand{\varep}{\varepsilon}
\newcommand{\al}{\alpha}
\newcommand{\chis}{\chi_{s}}
\newcommand{\sigmad}{\sigma^{*}}
\newcommand{\PA}{\boldsymbol{\alpha}}
\newcommand{\gam}{\gamma}
\newcommand{\lam}{\lambda}
\newcommand{\la}{\langle}
\newcommand{\ra}{\rangle}
\newcommand{\hs}{\hat{s}}
\newcommand{\htt}{\hat{t}}
\newcommand{\tG}{\tilde G}
\newcommand{\St}{\mathsf {St}}
\newcommand{\bfs}{\boldsymbol{s}}
\newcommand{\bfl}{\boldsymbol{\lambda}}
\newcommand{\tn}{\hspace{0.5mm}^{t}\hspace*{-0.2mm}}
\newcommand{\ta}{\hspace{0.5mm}^{2}\hspace*{-0.2mm}}
\newcommand{\tb}{\hspace{0.5mm}^{3}\hspace*{-0.2mm}}
\def\skipa{\vspace{-1.5mm} & \vspace{-1.5mm} & \vspace{-1.5mm}\\}
\newcommand{\tw}[1]{{}^#1\!}
\renewcommand{\mod}{\bmod \,}

\marginparsep-0.5cm

\renewcommand{\thefootnote}{\fnsymbol{footnote}}
\footnotesep6.5pt

\title{An infinite family of counterexamples to a conjecture on positivity}

\author{J. Miquel Mart\'inez}
\address{Department of Mathematics, Universitat de Val\`encia, 46100 Burjassot,
Val\`encia, Spain}
 \email{jomimar4@alumni.uv.es}

\keywords{Fusion category, positivity conjecture, Frobenius-Schur indicator}

\subjclass[2010]{Primary 18D10, 20C15} 

\begin{abstract}  
Recently, G. Mason has produced a counterexample of order 128 to a conjecture in conformal field theory and tensor category theory in \cite{Ma}. Here we easily produce an infinite family of counterexamples, the smallest of which has order 72.
\end{abstract}

\maketitle
If $G$ is a finite group and $\chi \in \irr G$ is an irreducible complex character
of $G$, we denote by $\nu_2(\chi)$ the Frobenius-Schur indicator of $\chi$.
The main result of \cite{Ma} is to produce an example of 
a finite group $G$ having an irreducible character $\chi$ such that $\nu_2(\chi)=1$,
and $\chi^2$ contains an irreducible constituent $\psi$ with $\nu_2(\psi)=-1$.
In this note, we produce an infinite number of examples.

\medskip

Let $p$ be an odd prime. It is well known that $S=\mathrm{SL}_2(p)$ has a unique involution
$$z=\begin{pmatrix}-1&0\\0&-1\end{pmatrix}, $$
which inverts the elements of $V$.
Let $V=C_p\times C_p$ be the natural module for $S$,
 and let $Q\leq S$ be any quaternion subgroup of order $8$. Recall that the unique irreducible character $\psi$ of $Q$ of degree 2 has Frobenius-Schur indicator $\nu_2(\psi)=-1$.  
 
\begin{thmA}
Let $G=VQ$ be the semidirect product. Let $1\neq \lambda\in \Irr(V)$. Then $\lambda^G=\chi\in \Irr(G), \nu_2(\chi)=1$ and $\chi^2$ contains $\psi$.
\end{thmA}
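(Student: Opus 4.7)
The plan is to unpack the three conclusions in turn, with all three driven by the single geometric fact that $z$ inverts $V$ and is the unique involution in $Q$.

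For irreducibility, since $\lambda$ has order $p$ (odd) and $z$ inverts $V$, we have $\lambda^z=\lambda^{-1}\ne\lambda$, so $z$ lies outside the inertia subgroup $Q_\lambda$. But $z$ is contained in every nontrivial subgroup of $Q$, so $Q_\lambda=1$, and Clifford theory yields $\chi=\lambda^G\in\Irr(G)$ of degree $8$, with $\chi|_V=\sum_{t\in Q}\lambda^t$ (a sum of pairwise distinct linear characters).

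For the Frobenius-Schur indicator, the key observation is that $V\triangleleft G$ forces $\chi$ to vanish off $V$, so
\[
\nu_2(\chi)=\frac{1}{|G|}\sum_{g\in G,\,g^2\in V}\chi(g^2).
\]
Writing $g=vq$ with $v\in V$ and $q\in Q$, one has $g^2=v\cdot{}^qv\cdot q^2$, and $g^2\in V$ forces $q^2=1$, i.e.\ $q\in\{1,z\}$. The $q=1$ contribution is $\sum_{v\in V}\chi(v^2)=\sum_{t\in Q}\sum_{v\in V}(\lambda^t)^2(v)$, which vanishes because $p$ odd ensures every $(\lambda^t)^2$ is a nontrivial character of $V$. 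The $q=z$ contribution uses $z^2=1$ and ${}^zv=v^{-1}$ to give $g^2=1$, hence $\chi(g^2)=\chi(1)=8$ for each of the $p^2$ values of $v$, contributing $8p^2$. Dividing by $|G|=8p^2$ yields $\nu_2(\chi)=1$.

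For the last claim, view $\psi$ as a character of $G$ by inflation through $G/V\cong Q$; this does not affect its Frobenius-Schur indicator. Since $\psi|_V=2\cdot 1_V$, applying Frobenius reciprocity to the induced character $\chi=\lambda^G$ gives
\[
[\chi^2,\psi]_G=[\chi\cdot\lambda^G,\psi]_G=[\chi|_V\cdot\lambda,\psi|_V]_V=2\sum_{t\in Q}[\lambda^t\lambda,1_V]_V.
\]
The $t$-summand equals $1$ precisely when $\lambda^t=\lambda^{-1}$; such $t$ form a coset of $Q_\lambda=1$, and $z$ is one solution, so the coset is $\{z\}$. Hence $[\chi^2,\psi]_G=2>0$, and $\psi$ is a constituent of $\chi^2$. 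The only real bookkeeping is in the indicator computation, but the vanishing of $\chi$ off $V$ and the collapse to $q\in\{1,z\}$ make that step short; everything else is formal Clifford theory once $Q_\lambda=1$ is established.
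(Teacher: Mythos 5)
Your proof is correct and follows essentially the same route as the paper: the same inertia-group argument via the unique involution $z$ for irreducibility, the same reduction of the indicator sum to elements $vq$ with $q\in\{1,z\}$, and the same underlying identity $\chi\cdot\lambda^G=(\chi|_V\,\lambda)^G$ for the last claim. The only cosmetic difference is that you compute the multiplicity $[\chi^2,\psi]=2$ via Frobenius reciprocity and $\psi|_V=2\cdot 1_V$, whereas the paper exhibits the containment $\chi^2\supseteq(1_V)^G\supseteq\psi$; your explicit remark that inflation preserves the Frobenius--Schur indicator is a welcome extra precision.
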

\begin{proof}
Let $1\neq \lambda\in \Irr(V)$. Notice that $\lambda^z=\overline{\lambda}\neq \lambda$. Hence if 
the stabilizer $G_\lambda>V$ then $Q_\lambda>1$ and $z\in Q_\lambda$, which is not possible. Thus $\lambda^G=\chi\in \Irr(G)$ by Problem 6.1 of \cite{Is}. 

Now,  if $g\in G$ then notice that $g^2\in V$ if and only if the order of $gV$ divides 2. This happens if and only if $g\in H= V\langle z \rangle$. Using that $\chi(g)=0$ if $g\not\in V$ we have that
$$\nu_2(\chi)=\frac{1}{|G|}\sum_{g\in G}\chi(g^2)=\frac{1}{|G|}\sum_{g\in G, g^2\in V}\chi(g^2)=\frac{1}{|G|}\sum_{g\in H}\chi(g^2).$$
If $v\in V$, notice that $(vz)^2=1$. Also $V=\{v^2\mid v\in V\}$, since $V$ has odd order. Therefore
$$\nu_2(\chi)=\frac{1}{|G|}(|V|\chi(1)+|V|[\chi_V, 1_V])=1.$$
Finally,
$$\chi^2=((\lambda^G)_V\lambda)^G=\left(\sum_{x\in Q}\lambda^x\lambda\right)^G$$
contains $(\lambda^z\lambda)^G=(1_V)^G$, which contains $\psi$ and we are done.
\end{proof}

\end{document}